\newlength{\originalbase}
\newcommand{\spacing}[1]{\setlength{\baselineskip}{#1\originalbase}}
\begin{document}
\spacing{1.5}

\newtheorem{theorem}{Theorem}[section]
\newtheorem{claim}[]{Claim}
\newtheorem{prop}[theorem]{Proposition}
\newtheorem{lemma}[theorem]{Lemma}
\newtheorem{corollary}[theorem]{Corollary}
\newtheorem{conjecture}[theorem]{Conjecture}
\newtheorem{definition}[theorem]{Definition}
\newtheorem{example}[theorem]{Example}

\theoremstyle{question}
\newtheorem{question}[theorem]{Question}

\title{Containment: A Variation of Cops and Robber}

\author{
Natasha Komarov\thanks{Dept.\ of Math., C.S., \& Stat., St.\ Lawrence University, Canton NY 13617, USA; nkomarov@stlawu.edu.}, \,
Danny Crytser\thanks{Dept.\ of Math., C.S., \& Stat., St.\ Lawrence University, Canton NY 13617, USA; dcrytser@stlawu.edu.}, \,
and John Mackey\thanks{Dept.\ of Math., Carnegie Mellon University, Pittsburgh PA 15213, USA; jmackey@andrew.cmu.edu.}}

\maketitle

\begin{abstract} 
We consider ``Containment'': a variation of the graph pursuit game of Cops and Robber in which cops move from edge to adjacent edge, the robber moves from vertex to adjacent vertex (but cannot move along an edge occupied by a cop), and the cops win by ``containing'' the robber---that is, by occupying all $\deg(v)$ of the edges incident with a vertex $v$ while the robber is at $v$. We develop bounds that relate the minimal number of cops, $\xi(G)$, required to contain a robber to the well-known ``cop-number'' $c(G)$ in the original game: in particular, $c(G) {\le} \xi(G) {\le} \gamma(G) \Delta(G)$. We note that $\xi(G) {\geq}  \Delta(G)$ for all graphs $G$, and analyze several families of graphs in which equality holds, as well as several in which the inequality is strict. We also give examples of graphs which require an unbounded number of cops in order to contain a robber, and show that there exist cubic graphs on $n$ vertices with $\xi(G) = \Omega(n^{1/6})$.
\end{abstract}

\section{Introduction}

The game of Cops and Robber on graphs was introduced independently by Nowakowski and Winkler \cite{NW} and Quilliot \cite{Q},
and has generated a great deal of study in the three decades since; see, e.g., \cite{BerarducciIntrigila,BGHK,3,4}.
In the original formulation a cop chooses a vertex as her initial placement on a connected, undirected graph $G$; the robber then places himself at a vertex. The players move alternately from vertex to adjacent vertex (or stay where they are) with full information. The cop's goal is to minimize capture time (that is, the time until both players occupy the same vertex at the same time); the robber's goal is to maximize it.
There are graphs on which a robber playing optimally can elude the cop forever; for instance, chasing the robber
on the 4-cycle is clearly a hopeless endeavor for the cop. Graphs on which a cop can win are called ``cop-win.''
More precisely, a graph is cop-win if there is a vertex $u$ such that for every vertex $v$, the cop beginning at
$u$ can capture a robber beginning at $v$.  Cop-win graphs---also known as ``dismantlable'' graphs \cite{NW}---have
appeared in statistical physics \cite{BW, congress} as well as combinatorics and game theory.

A natural generalization of this game is to allow $k {\geq} 1$ cops. The moves alternate as before between the set of cops and the robber, with every cop making her move (from vertex to adjacent vertex) and then the robber making his. The game again ends when a single cop occupies the same vertex as the robber. Graphs on which $k$ cops have a winning strategy are called $k$-cop-win (see~\cite{kcopwin,kcopwinHahn} for some characterizations of $k$-cop-win graphs). The smallest positive integer $k$ such that $k$ cops suffice to win on $G$ is called the cop number of $G$. The study of the cop number of graphs has generated perhaps the greatest interest in the area of Cops and Robber research. Meyniel's Conjecture (see, e.g.\ \cite{Frankl, LP}, et al.) states that no more than $O(\sqrt{n})$ cops are ever necessary on a graph with $n$ vertices.

In this paper, a variation of Cops and Robber called ``Containment'' is introduced, in which the cops move from edge to adjacent edge. As opposed to the variation considered by Dudek, Gordinowicz, Pra\l at, in which both players move from edge to edge~\cite{copsonedges}, the robber moves from vertex to adjacent vertex but cannot move through an edge occupied by a cop; a cop can move through a vertex occupied by the robber. Both sides continue to have perfect information as before. Instead of capturing the robber by occupying the same vertex at the same time, the cops now want to ``contain'' the robber by occupying all of the edges incident with the robber's location. That is, the game ends when it is the robber's turn and all of the edges incident
with the robber are occupied by cops. We will allow the cops and robber to stay put whenever they wish, and a single edge will be allowed to be occupied by multiple cops at the same time. 

We will use the notation $[m]$ to denote the set of all positive integers no greater than $m$. For any graph $G$, we will denote by $\delta(G)$ and $\Delta(G)$ the minimum and maximum degrees---respectively---of the vertices of $G$. A {\bf dominating set} of $G$ is any set $S \subseteq V(G)$ such that for all $v\in V(G)$, there exists $w \in S$ such that $d(v,w) \le 1$. The {\bf domination number} of $G$, denoted $\gamma(G)$, is the size of a smallest dominating set of $G$. The {\bf edge domination number} of $G$, denoted $\beta(G)$, is the size of a smallest edge dominating set of $G$---that is, a set of edges such that all edges in $G$ are either in the set or adjacent to an edge in the set. 
 All graphs will be assumed to be connected in what follows. 

If $k$ cops can contain a robber on a graph $G$, then $G$ is said to be {\bf $k$-cop-containable}. Since $\Delta(G)$ cops are always required (otherwise the robber is free to sit at a vertex of maximum degree, and the cops cannot contain him or force him to move), we will call $G$ {\bf containable} if  $k{=}\Delta(G)$ cops suffice to contain the robber on $G$, and {\bf uncontainable} otherwise. We will call the smallest $k$ such that $G$ is $k$-cop-containable the {\bf containability number} of $G$, written $\xi(G)$. Note that $\xi(G)$ is well-defined for all $G$ since, trivially, $\xi(G) \le |E(G)|$, the number of edges in $G$.

\section{Bounds on the containability number}
\label{prelims}

In Cops and Robber, the analogous assertion that $c(G) {\geq}\delta(G)$ is only true in general when $G$ contains no 3- or 4-cycles~\cite{girth8t3}. 
And trivially, $c(G) \le \gamma(G)$ for any $G$, since placing one cop on each vertex of a dominating set will lead to capture in one move. 
 Less obvious are the following bounds on the containability number $\xi(G)$ in Containment in relation to the cop number $c(G)$ in Cops and Robber.
 
 \begin{theorem}
\label{c(G) < xi(G) < DG}
For all $G$, $c(G) {\le} \xi(G) {\le} \Delta(G) \gamma(G)$.
\end{theorem}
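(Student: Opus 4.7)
The plan is to prove the two inequalities separately.

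For the upper bound $\xi(G)\le\Delta(G)\gamma(G)$, I would fix a minimum dominating set $D$ with $|D|=\gamma(G)$ and, as the initial placement, put one cop on every edge of $G$ that has an endpoint in $D$. Since each vertex of $D$ is incident to at most $\Delta(G)$ edges, this uses at most $\gamma(G)\Delta(G)$ cops. Suppose the robber then places at some vertex $r$. If $r\in D$ then every edge at $r$ is incident to $D$ and hence already occupied, so the robber is contained immediately. If $r\notin D$, the edges $\{r,u\}$ with $u\in D$ are already covered, while for each neighbor $u\notin D$ of $r$ the dominating property supplies some $v_u\in D$ adjacent to $u$, so the cop on the edge $\{v_u,u\}$ may slide in one step to the adjacent edge $\{u,r\}$. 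After this one cop move every edge at $r$ is occupied and the robber is contained before he gets to move.

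For $c(G)\le\xi(G)$, the plan is a shadow/simulation argument. Set $k=\xi(G)$ and fix a winning Containment strategy $\sigma$. In the classical game I deploy $k$ cops, each shadowing one Containment cop in a parallel Containment game run under $\sigma$, maintaining the invariant that at every moment standard cop $i$ stands on one of the two endpoints of the edge currently occupied by Containment cop $i$. This invariant is always maintainable: when Containment cop $i$ slides from an edge $e$ to an adjacent edge $e'$ with shared vertex $w$, the shadow can always reach $w$ (an endpoint of $e'$) from either endpoint of $e$ in one step. The parallel Containment-robber is defined to copy the moves of the real robber.

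The heart of the argument is that this simulation forces a capture in the classical game. If the real robber ever tries to traverse an edge occupied by a Containment cop in the parallel game, then the corresponding shadow sits on one of the two endpoints of that edge; since no capture has yet occurred, the shadow cannot be on the robber's current vertex, so it must be on the target vertex, and the real robber walks into capture. Otherwise the parallel game is a legitimate Containment play, and by the winning property of $\sigma$ it must terminate with the robber contained at some vertex $v$ with all $\deg(v)$ incident edges occupied. At that instant each shadow cop sits on $v$ or on a neighbor of $v$: if any sits on $v$ the real robber is captured, and if not, every neighbor of $v$ is occupied by a shadow, so the real robber cannot move to a neighbor without being captured and a single further cop move onto $v$ finishes the job.

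The only subtle point I anticipate is in the second argument: I would need to be careful about the move-order conventions so that ``the parallel robber's intended move is illegal'' really does imply an immediate real-game capture, and to observe that the freedom to choose which endpoint each shadow occupies is never actually needed for the conclusion. The first inequality, by contrast, essentially boils down to a single-move strategy on top of a very natural initial configuration, so I expect no real obstacle there.
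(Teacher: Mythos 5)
Your proposal is correct and takes essentially the same approach as the paper: the identical dominating-set placement with a one-move finish for $\xi(G)\le\Delta(G)\gamma(G)$, and the same shadow/endpoint simulation of a winning Containment strategy for $c(G)\le\xi(G)$. If anything, your version states the endpoint invariant and the end-game case analysis a bit more carefully than the paper does.
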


\begin{proof}
For the lower bound, let $\sigma$ be a winning cop strategy in Containment on the graph $G$, executed by the $\xi(G)$ cops required to contain the robber. Let $\rho$ be the Cops and Robber strategy derived from $\sigma$ in the following way. If $\sigma$ places the $\xi(G)$ cops initially on edges $e_1 {=} u_1v_1$, $e_2 {=} u_2v_2, \dots$, and $e_{\xi(G)} {=} u_{\xi(G)}v_{\xi(G)}$, then $\rho$ places $\xi(G)$ cops initially on vertices $v_1, v_2, \dots, v_{\xi(G)}$ (where we choose arbitrarily between an endpoint of the corresponding edge).  Whenever $\sigma$ directs a given cop to go from an edge $uv$ to an edge $vw$, $\rho$ directs the corresponding cop to go to $v$ (either by moving from $u$ or by staying at $v$, since $\rho$ would place the cop either at $u$ or $v$ in the previous step). If a robber is unable to use an edge $e = xy$ in Containment because it is occupied by a cop, then he is unable to use that edge in Cops and Robber, because there is a cop waiting for him on the other endpoint.

Since the final step of $\sigma$ is to place at least one cop at each of the edges $Rv$ for $v \in N_G(R)$, where $R$ is the robber's position at that time, the final step of $\rho$ will either be to place a cop on $R$, capturing the robber immediately, or to place a cop on each neighbor of $R$, capturing the robber at the next turn. 
Consequently, $\xi(G)$ cops suffice to capture the robber in Cops and Robber, and so $c(G) \le \xi(G)$.

For the upper bound, let $S$ be a dominating set in $G$. 
For each $v{ \in} S$, place one cop on each of the edges incident with $v$.
Suppose that the robber places himself at a vertex $x$. 
If $x {\in} S$, then since there is at least one cop at every edge incident with $x$, the robber has placed himself in an immediately losing position. 

 Therefore we can assume that $x {\in} V(G) \backslash S$.
	We will show that the cops are able to contain the robber in one step.
	For any edge that has $x$ as an endpoint, the other endpoint is either in $S$ or not in $S$.
	For any edge incident with $x$ that has its other endpoint at some $w {\in} S$, the cop on edge $xw$ is already incident with the robber, and so remains stationary.
	For all other edges incident with $x$, the other endpoint is some $y {\notin} S$. 
	Therefore, there exists some $w {\in} S$ adjacent to $y$, and consequently there is a cop at edge $wy$. 
	This cop moves to $xy$.
	Therefore, every edge incident with $x$ can be occupied by a cop after one step. 
\end{proof}

Note that the bounds in Theorem~\ref{c(G) < xi(G) < DG} are tight, as both bounds can be met by infinite families of graphs. For the lower bound, consider the family of cycles $C_n$, which have $c(G) = \xi(G) = 2$ (for all $n{\geq}4$). For the upper bound, any complete graph $K_n$ has $\gamma(K_n) {=}1$, $\Delta(K_n) = n{-}1$, and $\xi(G) = n{-}1 = \gamma(K_n) \Delta(K_n)$.

\begin{conjecture}
\label{xi<Dc}
For all graphs $G$, $\xi(G) {\le} \Delta(G)c(G)$.
\end{conjecture}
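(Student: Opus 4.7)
The plan is to convert a winning strategy in Cops and Robber into a Containment strategy. Given an optimal vertex-cop strategy $\sigma$ using $c(G)$ cops, I would deploy $c(G)\Delta(G)$ edge-cops organized into $c(G)$ \emph{teams} of $\Delta(G)$ cops each, with team $i$ shadowing the $i$-th vertex-cop. The intended invariant is that, at the end of every round, if the $i$-th vertex-cop is at vertex $v_i$ in the simulated Cops and Robber game, then team $i$'s edge-cops collectively occupy every edge incident with $v_i$ (possibly with multiple cops on a single edge, since $\deg(v_i)\le\Delta(G)$). Under this invariant the edges blocked to the robber in Containment coincide with those blocked in Cops and Robber, so the robber cannot prolong the game beyond the length of $\sigma$; when $\sigma$ has some vertex-cop $i$ capture the robber at a vertex $v$, team $i$ already occupies all edges at $v$ and the robber is contained.

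The main work is implementing each vertex-cop move $v\to w$ as an edge-cop transition that restores the invariant in a single round. The cop initially on $vw$ can stay put, taking care of the edge $wv$, and each cop on an edge $vu$ with $u\in N_G(v)\cap N_G(w)$ can slide over to $wu$. The obstacle is that an edge-cop on $vu$ with $u\notin N_G(w)\cup\{w\}$ can move only to an edge sharing an endpoint with $vu$, and among edges incident with $w$ the only such option is $vw$ itself. Hence, unless $N_G(v)\setminus\{w\}\subseteq N_G(w)$, some edges $wx$ with $x\notin N_G(v)\cup\{v\}$ cannot be covered by any cop of the team in one round, and the invariant cannot be preserved.

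To work around this obstruction, I would attempt to allow each vertex-cop step to be simulated by two edge-cop rounds: in the first round, the entire team collapses onto edge $vw$; in the second, it disperses to the $\deg(w)$ edges at $w$. The subtle issue is that between these two rounds the robber gets an extra move, and all edges at $v$ other than $vw$ are temporarily uncovered by team $i$, so the robber may slip onto $v$ or otherwise acquire options unavailable to him in $\sigma$. Making this rigorous will likely require either preprocessing $\sigma$ so that the vertex-cop performing the move is always at distance at least two from the robber (with the other vertex-cops idling in the meantime), or staggering the teams' transitions so that whenever one team's coverage is briefly compromised the robber's position is known to keep him from exploiting it. I expect the coupling between the robber's extra moves and the teams' coverage gaps to be the main obstacle to proving the conjecture.
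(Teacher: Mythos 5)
This statement is not proved in the paper at all: the authors explicitly write that they ``were unable to prove or disprove'' it, and it appears only as a conjecture. So the question is whether your argument stands on its own, and it does not --- you have correctly identified the obstruction but not overcome it. The one-round simulation of a vertex-cop move $v\to w$ fails exactly as you say: a cop on $vu$ with $u\notin N_G(w)\cup\{w\}$ can reach no edge at $w$ other than $vw$, so the team cannot redeploy in one step unless $N_G(v)\setminus\{w\}\subseteq N_G(w)$. The two-round fix (collapse to $vw$, then disperse) is not a repair but a different game: each simulated cop move now costs two rounds, so relative to the simulated play the robber moves at speed~$2$, and the strategy $\sigma$ carries no guarantee against a fast robber. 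This is not a technicality one can expect to engineer away by preprocessing $\sigma$ or staggering the teams: the cop number against a speed-$2$ robber is known to be genuinely larger than $c(G)$ on some graphs (it can be polynomial in $n$ even when $c(G)$ is small), so any argument that only uses $\sigma$ as a black box while conceding extra robber moves cannot bound $\xi(G)$ by a function of $c(G)$ and $\Delta(G)$ alone. Moreover, during the collapsed intermediate round the robber may step onto $v$ itself or onto an uncovered neighbor of $v$, reaching positions that simply do not occur in the run of $\sigma$ being simulated, so the invariant you need is irreparably broken rather than merely delayed.

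Two smaller points. First, your invariant ``the edges blocked to the robber in Containment coincide with those blocked in Cops and Robber'' is not quite the right statement, since in Cops and Robber no edges are blocked at all --- the robber is free to move onto or past a cop (and is then caught); what you actually need is that the robber's surviving options in Containment are a subset of his surviving options against $\sigma$, which is true under your invariant but should be said that way. Second, the capture step needs care: in $\sigma$ capture occurs when a cop \emph{moves onto} the robber's vertex $v$, at which instant team $i$ has not yet completed its (two-round) transition to the edges at $v$, so containment is not immediate even at the end. None of this refutes the conjecture --- your teams-of-$\Delta(G)$ framing is the natural first attack, and it is useful to have the failure mode pinned down precisely --- but as written the proposal is an honest sketch with an acknowledged hole, not a proof, and the hole is the entire difficulty. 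For contrast, the bound the paper does prove, $\xi(G)\le\gamma(G)\Delta(G)$, succeeds precisely because stationing a team on every vertex of a dominating set removes the need to ever relocate a team across the graph.
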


 This conjecture does hold ``on average'' in many random graphs (edges are placed independently between pairs of vertices with probability $p$). Pralat~\cite{pralatcontainment} verified that the conjecture holds for some ranges of p and holds up to a constant or an O($\log n$) multiplicative factor for some other ranges.

\subsection{Retracts}


A \textbf{homomorphism} $f: G_1 \to G_2$ consists of a map $f$ from the vertices of $G_1$ to the vertices of $G_2$ such that $xy \in E(G_1)$ implies that $f(x)=f(y)$ or $f(x)f(y) \in E(G_2)$. Informally, it maps each edge of $G_1$ either to a an edge of $G_2$ or to a single vertex of $G_2$.

A subgraph $H \subset G$ is called a \textbf{retract} if there is a graph homomorphism $\phi: G \to H$ that restricts to the identity on $H$.  In \cite[Thm. 3.1]{BerarducciIntrigila}, it is shown that the cop number of a retract $H \subset G$ is bounded by the cop number of $G$. A similar result holds for the containment number.

\begin{theorem}
 If the subgraph $H \subset G$ is a retract of $G$, then $\xi(H) \leq \xi(G)$.
\end{theorem}

\begin{proof}
Suppose that $\phi: G \to H$ is a retract onto the subgraph $H$ and that $k= \xi(G)$; we must show that $k$ cops have a winning strategy on $H$. We play a shadow game on a copy of $G$, feeding the robber's moves from the $H$ game. First arrange $k$ cops $C_1,\ldots,C_k$ on the shadow graph $G$; their images under $\phi$ will be the starting cop move in the $H$-game. If a $G$-cop sits on an edge $vw$ which would be sent to a vertex $x$ under the homomorphism (that is, if $\phi(v)=\phi(w)=x$), then we can place the cop on any edge adjacent to $x$. Mark each cop in the $H$ game as $C_j' = \phi(C_j)$. 

After the robber makes his initial move in the $H$-game, we duplicate his move in the $G$-game and the cops in the $G$-game move according to their winning strategy. We then apply $\phi$ to their new positions to get the moves for the cops in the $H$ game. The graph homomorphism property ensures that applying $\phi$ gives legal moves for the $H$-cops. At any point, if an edge would be compressed to a vertex $x$, the corresponding cop is sent to an arbitrary edge incident with $x$; this never will prevent the cop from being able to follow a later move. 

Any robber move that would be prevented in the $G$-strategy can only arise from a cop $C_j$ that lies on an edge of $H$. But this will be fixed under $\phi$, and so there are no ``new'' moves that the robber can use in $H$. Eventually the $G$-cops contain $R$ on a vertex of $H$; their images under $\phi$ will contain $R$ on the same vertex.
\end{proof}

The previous theorem is useful for providing lower bounds on the containment number of a graph; obtaining upper bounds from retracts is possible as well. In \cite[Thm. 3.2]{BerarducciIntrigila} the cop number of a graph $G_1$ with retract $G_2$ is bounded by the maximum of $c(G_2)$ and $c(G_1-G_2)+1$; we present a similar result below.


 Let $H \subset G$ be a retract under $\phi: G \to H$. We call $\phi$ a \textbf{cubical retract} (and say that $H$ is a cubical retract of $G$) if whenever $v \in V(G) \setminus V(H)$ is a vertex adjacent to $h \in H$, then we necessarily have $h = \phi(v)$.


\begin{example}
The triangle $K_3$ retracts onto $K_2$, but this cannot be chosen to be a cubical retract. 

There is a cubical retract of a square $C_4$ onto the line segment $K_2$; we can also choose a non-cubical retract (send both vertices outside the subgraph onto the same vertex of $K_2$). 
More generally, the $n$-cube can be defined as the graph on vertex set $\{0,1\}^n$ where two vertices are connected by an edge if they differ in exactly one coordinate. A cubical retract of $Q^{n+1}$ onto $Q^n \times \{0\} \cong Q^n$ is given by setting the last coordinate to $0$.
\end{example}

For a graph $G$ and $v \in V(G)$, we let $d_G(v)$ be the degree of the vertex $v$.

\begin{definition}
 Let $H \subset G$ be a subgraph. For each $v \in H$ define the \textbf{degree discrepancy of $v$} as $dd(G,H,v) = d_G(v) - d_H(v)$. We define the \textbf{degree discrepancy of $H$} as $dd(G,H) = \max_{x \in H} dd(x)$. In other words, $dd(G,H)$ is the maximum number of edges connecting a common vertex of $H$ to vertices outside of $H$. 
\end{definition}


If $H \subset G$ is a subgraph, then $G - H$ is the subgraph of $G$ induced on $V(G) \setminus V(H)$. Since this sometimes results in a disconnected graph, we define $\xi(G)$ for $G$ disconnected to be the maximum of $\xi(C)$ over all connected components $C$ of $G$. 



The following lemma is used to ``attach'' edge cops in a certain way to the robber in a containment game. We will call a cop $C$ a \textbf{tail} if it is incident to the robber vertex $R$ and follows it across any edge that $R$ traverses. By \textbf{attaching a tail}, we mean placing a cop on an edge incident with $R$ and then allowing it to follow $R$ in this fashion for the rest of the game.

\begin{lemma}\label{attaching-tails}
Suppose that we are playing a containment game on a graph $G$ and that there are at least $c(G)$ non-tail cops. Then a new tail cop can be attached to $R$. If we have $c(G)+k-1$ non-tail cops, then $k$ new tail cops can be attached to $R$. 
\end{lemma}

\begin{proof}
Suppose that we have $k = c(G)$ non-tail cops. We play a shadow cops-and-robbers capture game on another copy of the graph $G$; place $k$ vertex cops on the shadow game according to their winning capture strategy. For each vertex cop $C_i$ placed on the vertex $v$ in the shadow game, place an edge cop $C_i'$ on an arbitrary edge $vw$ incident with $v$. This constitutes the first move for the cops in the containment game. 

After the robber makes his move in the containment game, mirror that move in the shadow capture game. Using the winning strategy in the capture strategy, advance the cops in the shadow game. Mimic these moves, so that $C_i'$ sits on the edge that $C_i$ most recently crossed. Repeating this, eventually one of the shadow vertex cops moves onto the vertex with $R$. This corresponds to attaching a tail to $R$. 
\end{proof}

\begin{theorem}\label{cubical-retract-thm}
Let $H \subset G$ be a cubical retract of $G$ under $\phi$. Then 
$$
\xi(G) \leq \max(\xi(H), \xi(G-H) \} + dd(G,H)+ \Delta(H)-1 .
$$
\end{theorem}

\begin{proof}
Let $k = \max(\xi(H), \xi(G-H) \} + dd(G,H) -1 + \Delta(H)$; we must show that $k$ cops have a winning containment strategy on $G$. 
Let 
$$
m = dd(G,H) + \Delta(H) + c(H) - 2
$$
and 
$$n = \max\{ \xi(H), \xi(G-H) \} - c(H) + 1 
$$
so that $m+n = k$. (Note that $\xi(H) \geq c(H)$ by Theorem \ref{c(G) < xi(G) < DG}, so that both of these are positive integers.)

First phase: we use $m$ of the cops and Lemma \ref{attaching-tails} to attach $\Delta(H) + dd(G,H)  -1 $ tails to $\phi(R)$ in $H$. (These cops sit on edges of $H$ and follow $\phi(R)$, so that they are always adjacent to $\phi(R)$.) After these are attached, there are $c(H) - 1$ non-tail cops left over, which can be added to the remaining $n$ to get $\max \{\xi(H), \xi(G-H) \}$ non-tail cops. 

Second phase: After the first phase, the remaining $\max \{\xi(H), \xi(G-H) \}$ cops move until either the robber leaves $H$ or they contain him on a vertex of $H$. If the robber leaves $H$, then he must enter one of the connected components of $G-H$, and then the free $\max \{\xi(H), \xi(G-H) \}$ cops can contain $R$ after some number of steps. 

Note that, after we have attached $dd(G,H) + \Delta(H)-1$ tails to $\phi(R)$, if $R$ ever moves from $G - H$ to $H$, he must move onto $\phi(R)$ (using the cubical property of the retract); we can fan out the $dd(G,H)+ \Delta(H) -1$ tails on $\phi(R)$ to prevent $R$ from moving to any vertex other than the vertex of $G-H$ it came from. The cops from the second phase can pursue $R$ as if he remained on the vertex he stood on before his move onto $H$. Since there are at least $\xi(G-H)$ cops, they eventually contain $R$.
\end{proof}

\begin{example}
\label{hypercubebound}
As noted above, the $n$-cube is a cubical retract of the $n+1$-cube. It is straightforward to check that $dd(Q_{n+1},Q_n)=1$ and $\Delta(Q_n)=n$, so
$$
\xi(Q_{n+1}) \leq \xi(Q_n) + 1 + n -1 = \xi(Q_n) + n.
$$
Since $\xi(Q_3)=3$ (see Proposition \ref{k-tracks-containable}), we inductively obtain the inequality 
$$
\xi(Q_n) \leq \frac{n(n-1)}{2}, \qquad n \geq 3.
$$
It is shown in \cite[Thm. 4]{MaamounMeyniel} that $c(Q_n) = \lceil \frac{n+1}{2} \rceil$. Thus the hypercubes $Q_n$, for $n \geq 3$, provide an infinite class of examples where $\xi(G)$ is strictly less than the conjectured upper bound $\Delta(G) c(G)$ as in Conjecture \ref{xi<Dc}.
\end{example}

\begin{example}
For any graph $G$, $G \times K_2$ (box product) retracts cubically onto $G \cong G \times \{0\}$ with degree discrepancy $1$. This gives us 
$$
\xi(G \times K_2) \leq \xi(G) + \Delta(G)
$$
\end{example}

\section{Containability}

\subsection{Preliminary examples and special families}
\label{containability specialcases}
Considering containability in a few special cases may aid in building intuition about this game. The $n$-cycle $C_n$ is containable; it has $\Delta(C_n) {=}2$ and two cops can win by ``squeezing'' the robber between them. Trees also prove to be containable: the $\Delta(G)$ cops chase the robber until he is at a leaf, with all of the cops staying on a single edge until they are incident with the robber on their turn; then they fan out, containing the robber and ending the game.

The complete graph $K_n$ is containable, since initial placement of one cop on each of the $n{-}1$ edges incident 
with a vertex will lead to a win by the cops on their first move. In a similar fashion, we can see that complete 
multipartite graphs are containable. Recall that the complete multipartite graph $K_{n_1,\dots,n_r}$ has
the union of pairwise disjoint sets $S_1,\dots,S_r$ with cardinalities $n_1,\dots,n_r$, respectively, as vertices. Vertices
are joined by an edge if and only if they are not in the the same set. 
To show containment, we select a partitioning set $S_i$ of smallest cardinality and for any other partitioning set, 
$S_j$, we place $|S_j|$ cops on edges from $S_i$ to $S_j$ ensuring that every vertex in $S_i \cup S_j$ has at least one 
of these cops on an incident edge. Suppose that the robber starts, without loss of generality, on a vertex $v \in S_1$ (which has neighborhood $\displaystyle \bigcup_{k=2}^r S_k$). For any edge $\{v,u\}$ that is not occupied by a cop, say with $u \in S_k$, there exists $w\in S_1$ with a cop on the edge $\{u,w\}$ who can move onto $\{v,u\}$ in the next move. Therefore, the cops win on their first turn.





\subsubsection{A family of containable cubic graphs}
\label{k-tracks-section}

For any integer $k{\geq} 3$, define the {\bf $k$-track} to be $C_k {\square} K_2$ --- that is, the graph consisting of two disjoint $k$-cycles with 
vertices labeled $a_0, a_1, \dots, a_{k-1}$ and $b_0, b_1, \dots, b_{k-1}$, respectively, with an edge connecting $a_j$ and $b_j$ 
for all non-negative integers $j < k$. Two edges are said to be parallel if their endpoints have the same indices.
Note that the graph $Q_3$ is the $4$-track.

\begin{prop} \label{k-tracks-containable}
All $k$-tracks are containable. 
\end{prop}

\begin{proof}
The game will be said to be in state $P_j$ if it's the robber's turn to move, two cops occupy parallel edges and the 
other cop occupies an edge $e$ on one of the cycles such that a path of shortest length from $e$ to the edge occupied
by a cop on the same cycle as $e$ has length $j$ and contains the vertex on which the robber is located.

The game will be said to be in state $Q_j$ if it's the robber's turn to move, two cops occupy parallel edges and the 
other cop occupies an edge $e$ between the cycles such that the robber is not at an endpoint of $e$, and one of the paths
of shortest length $j$ from $e$ to the parallel edges contains the vertex on which the robber is located.

We initially place one cop on $\{a_0,b_0\}$ and the other two cops on $\{a_{\frac{k-1}{2}},a_{\frac{k+1}{2}}\}$ and
$\{b_{\frac{k-1}{2}},b_{\frac{k+1}{2}}\}$ if $k$ is odd and both on $\{a_{\frac{k}{2}},b_{\frac{k}{2}}\}$ if $k$ is even.
After the initial placement of the robber, the cops can move to place the game into state $P_j$ with $j \leq \frac{k}{2} - 1$.

If the game is in state $P_t$ with $t > 0$, then regardless of the robber's move, the cops can move to place the game
into state $Q_t$. (If the robber moves to an endpoint of $e$, or to an endpoint of the edge parallel to $e$, then the two cops on parallel edges can advance toward the robber while the cop on $e$ can move to an edge between the two cycles to
place the the game into state $Q_t$.
Otherwise, the two cops on parallel edges can remain on those edges while the cop on $e$ can move to an edge between the two cycles to place the game into state $Q_t$.)
If the game is in state $Q_t$ with $t > 0$, then regardless of the robber's move, the cops can move
to place the game into state $P_{t'}$ with $t' < t$.

Alternately applying the above two strategies allows the cops to eventually force the game into state $P_0$.
At this point, without loss of generality, the robber is at $a_1$ and there are cops on $\{a_0,a_1\}$,$\{a_1,a_2\}$ and $\{b_0,b_1\}$.

\begin{figure}[h!t]
\centering
\includegraphics[scale=.4]{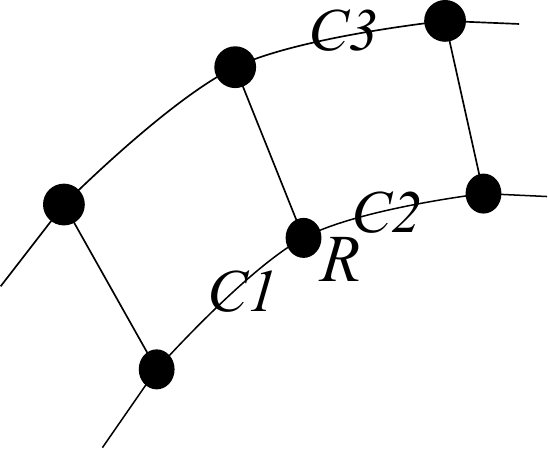}
\caption{State $P_0$}
\label{k-track}
\end{figure} 

From this position the robber must move to $b_1$, or else he will be contained on the next turn.
In response, the cop on $\{a_1,a_2\}$ moves to $\{a_0,a_1\}$, the cop on $\{a_0,a_1\}$ moves to $\{a_0,b_0\}$, and 
the cop on $\{b_0,b_1\}$ moves to $\{b_1,b_2\}$. From here, the cops will win at their next turn regardless of the robber's move.
\end{proof}

\subsubsection{A family of uncontainable cubic graphs}
%
%
%
%

For any integer $k {\geq} 3$, define the {\bf $k$-ring of squares} to be the graph consisting of $k$ disjoint four-cycles arranged cyclically with antipodal pairs of points in neighboring four-cycles connected. Figure~\ref{4ringexample} shows the $4$-ring of squares.

\begin{prop}
The $k$-ring of squares is uncontainable for all $k{\geq}4$.
\end{prop}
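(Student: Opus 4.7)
The plan is to exhibit a strategy for the robber against three cops on the $k$-ring of squares (with $k \geq 4$). Since the graph is $3$-regular we have $\delta(G) = 3$, so this shows $\xi(G) > \delta(G)$ and hence uncontainability.

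First I would identify the underlying ``ladder'' structure of the $k$-ring: the vertex set decomposes into two vertex-disjoint cycles of length $2k$, namely $C^+ : a_0, b_0, a_1, b_1, \ldots, a_{k-1}, b_{k-1}$ (with in-square edges $a_ib_i$ and inter-square edges $b_ia_{i+1}$) and $C^- : c_0, d_0, c_1, d_1, \ldots, c_{k-1}, d_{k-1}$ (built analogously), linked by $2k$ ``rung'' edges of the form $a_id_i$ and $b_ic_i$. Every vertex is incident to exactly two cycle edges on its big cycle and one rung to the other. Consequently, to contain the robber at a vertex $v$, the cops must simultaneously occupy both cycle edges at $v$ together with the rung at $v$; with only three cops the required composition is rigid (two cycle cops on $v$'s big cycle adjacent to $v$, plus one rung cop at $v$), and any deviation means containment fails.

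The robber's strategy consists of two rules. \textbf{(R1)} If the rung at his current vertex is cop-free, cross it to the other big cycle. \textbf{(R2)} Otherwise, at least one of his two cycle edges must be cop-free (else he is already contained), so move along a free cycle edge, choosing the direction that increases his distance along his big cycle from any second cop on his side. Rule (R1) exploits the rigidity above: after a rung crossing, cops that were on the robber's old big cycle are now ``wrong-sided,'' and because $C^+$ and $C^-$ are vertex-disjoint, each such cop needs at least two moves to traverse a rung and reach the new side, whereas the robber needs only one move to switch again or to run. Rule (R2) relies on the length $2k \geq 8$ of each big cycle to furnish the robber with running room.

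The verification is by the invariant that from any non-contained state, following (R1)/(R2) yields a state from which the cops cannot contain on their next move. The main obstacle is completing the case analysis. The delicate cases are those in which the cops pin the robber with a rung cop at his column plus one adjacent cycle cop on his side, forcing him to use the remaining free cycle edge. Here one must show that after the robber's move to an adjacent vertex $v'$, the new required containment triple (two cycle edges on $v'$'s side plus the rung at $v'$) cannot be assembled by the cops in a single move, because at least one of the three required edges demands a cop currently too distant to reach --- a distance argument that is valid precisely when $2k \geq 8$. Combined with the fact that (R1) applies whenever the robber's new rung happens to be free, this sustains the invariant indefinitely.
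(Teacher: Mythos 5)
There is a genuine gap here, and in fact one of your two strategy rules is not just unproven but false as stated. Consider a robber at a vertex $v$ of $C^+$ whose rung is $vu$ with $u\in C^-$, and suppose the three cops sit on the two cycle edges of $C^-$ incident with $u$ and on one cycle edge of $C^+$ incident with $v$. The robber is not contained at $v$ and his rung is cop-free, so rule (R1) orders him to cross to $u$. But then the two cops at $u$ stay put, the third cop slides from her cycle edge at $v$ onto the rung $vu$ (these edges share the endpoint $v$), and all three edges incident with $u$ are occupied: the robber has walked into containment in one move. So (R1) cannot be applied unconditionally; it needs a safety test on the destination, and supplying that test is precisely the case analysis you defer. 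Relatedly, your stated invariant (``from any non-contained state, one application of (R1)/(R2) reaches a state the cops cannot contain next move'') is exactly the claim that needs proof, and the counterexample shows it fails for the strategy as written. The ``distance argument valid precisely when $2k\ge 8$'' is never carried out, so the heart of the proof is missing.

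It is worth contrasting this with the paper's argument, which avoids move-by-move tracking altogether. The paper works with the \emph{bridges} (the inter-square edges, which in your decomposition lie \emph{on} the big cycles rather than being the rungs) and defines the set $\mathcal{P}$ of positions in which the robber stands at a vertex with an unoccupied bridge on his move. It shows the robber can start in $\mathcal{P}$ by counting ($2k\ge 8$ bridges, each cop adjacent to at most two), and then shows he can return to $\mathcal{P}$ within two steps: blocking all of his two-step escape routes would require a cop in each of four pairwise disjoint edge sets $E_1,E_2,E_3,E_4$, which three cops cannot do. That disjointness-plus-pigeonhole count is the single clean step that replaces your entire deferred case analysis, and it is robust to exactly the kind of ambush that defeats your rule (R1). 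Your ladder decomposition of the $k$-ring into two $2k$-cycles joined by a perfect matching is a nice structural observation, but by itself it does not close the argument.
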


\begin{proof}
We assume that there are three cops attempting to contain the robber and that $k=4$ since the analysis remains the same for any $k>4$.
An edge that is incident with a vertex on a four-cycle, but not itself on a four-cycle, is called a {\bf bridge}. 
The game will be said to be in state $P$ if it's the robber's turn to move and he is at a vertex incident with an unoccupied bridge.
We claim that if the game is in state $P$, then the robber can force the game back into state $P$ in one or two of his moves.

If the cops have a winning strategy from some initial position, then they have a winning strategy 
from any initial position so we may assume that they are all initially positioned on the same bridge. The robber can thus initially 
place himself at a vertex that is not an endpoint of that bridge to ensure that the game will be in state $P$ after the cops' first
turn.

\begin{figure}[h!t]
\centering
\includegraphics[scale=.4]{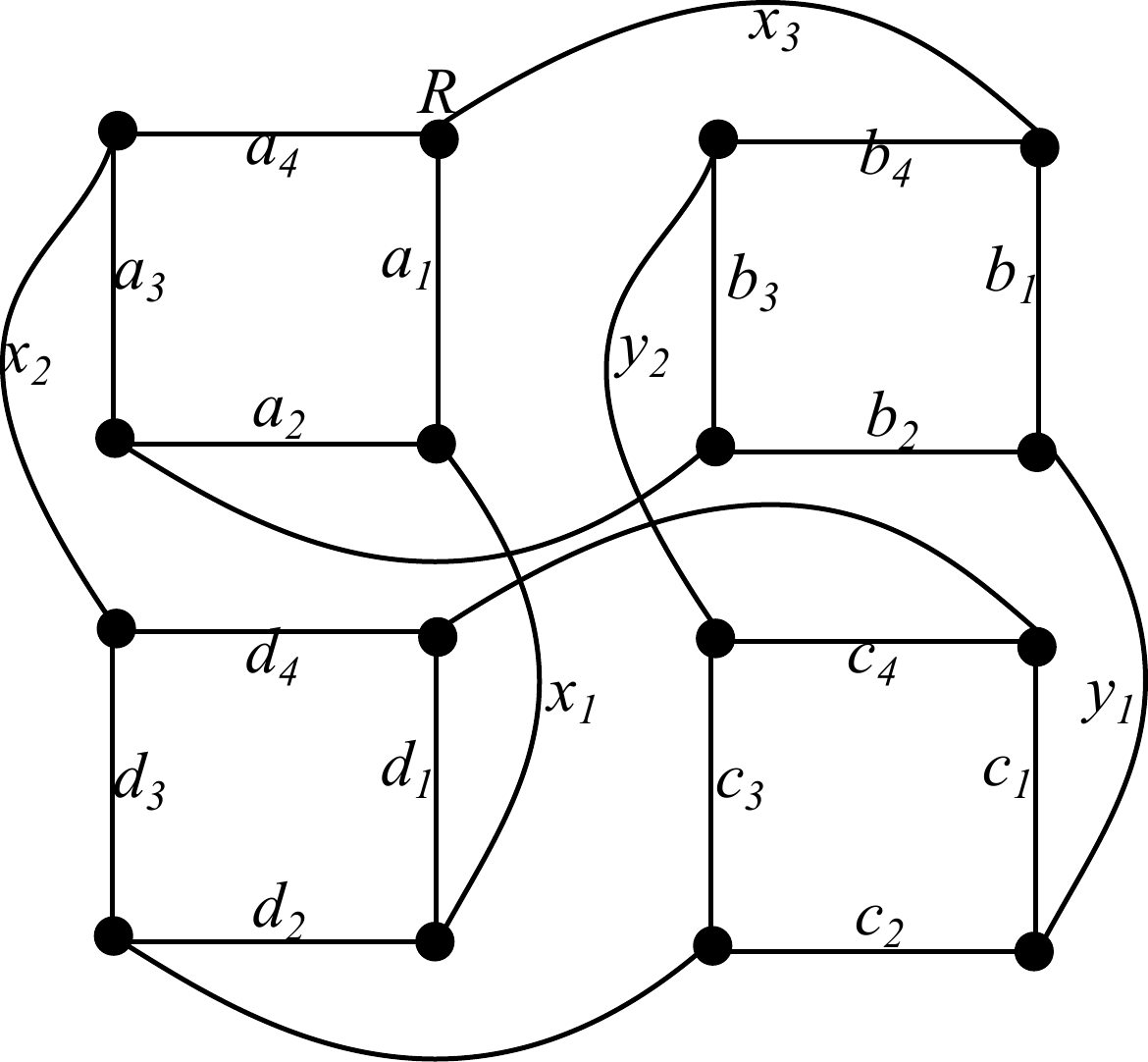}
\caption{The 4-ring of squares is uncontainable}
\label{4ringexample}
\end{figure} 

Now suppose that the game is in state $P$ and that the robber is at the vertex labeled $R$ in Figure~\ref{4ringexample}. 
Consider the pairwise disjoint sets of edges $E_1 = \{b_1, b_2, y_1, c_1,c_2\}$, $E_2=\{b_3,b_4,y_2,c_3,c_4\}$, 
$E_3 = \{a_1,a_2,x_1,d_1,d_2\}$ and $E_4=\{a_3,a_4,x_2,d_3,d_4\}$. If there are no cops on the edges in $E_3$, then the
robber can move along $a_1$ and the game will return to state $P$ after the cops move. Similarly, if there are no cops
on the edges in $E_4$, then the robber can move along $a_4$ and the game will return to state $P$ after the cops move. 
Thus, we may assume that at least two of the three cops are occupying edges in $E_3 \cup E_4$ and that there is at most 
one cop occupying an edge in $E_1 \cup E_2$. The robber then moves along $x_3$. After the cops' subsequent move, the edges 
of $E_1$ or $E_2$ will still be unoccupied by cops. The robber then moves along $b_1$ or $b_4$, accordingly, and the game
must return to state $P$ after the cops move.
\end{proof}

It is interesting to note that the 3-ring of squares is, in fact, containable. Three cops, placed at the marked edges as in Figure~\ref{3ringofsquares}, will contain the robber in 4 moves. We will leave the verification of this fact as an exercise to the reader.

\begin{figure}[h!t]
\centering
\includegraphics[scale=.4]{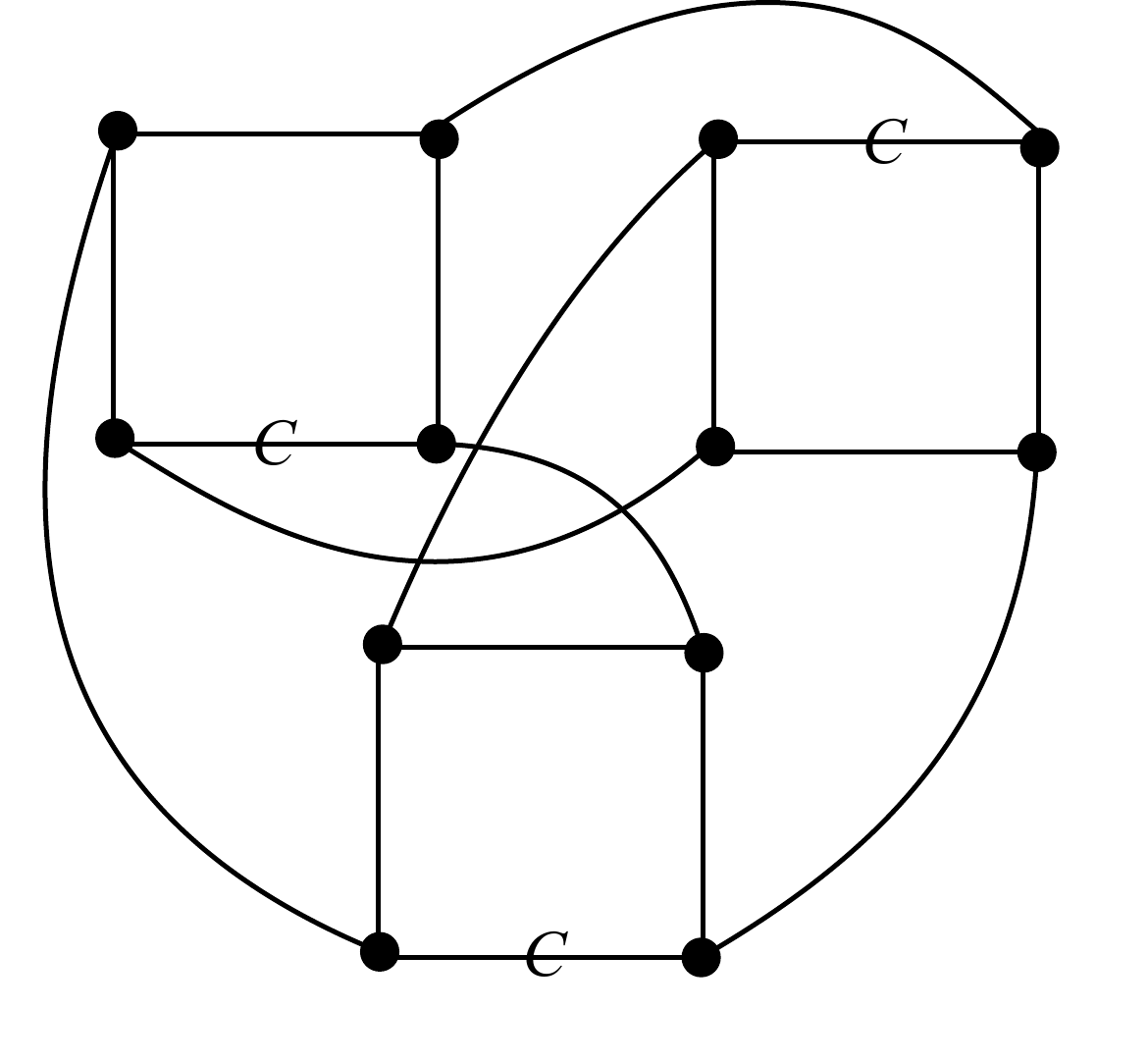}
\caption{The 3-ring of squares}
\label{3ringofsquares}
\end{figure}

\subsubsection{Another family of uncontainable cubic graphs: hypercubes}

Recall that the 3-cube $Q_3$ is containable, which follows both from Example~\ref{hypercubebound} and from the discussion in Subsection~\ref{k-tracks-section}. We will show here that this is the largest containable hypercube, and give both upper and lower bounds on $\xi(Q_n)$ for $n \geq 4$.

\begin{theorem}
When $n \geq 4$, $2n{-}2 \le Q_n \le  {n \choose 2}$ for the hypercube $Q_n$.
\end{theorem} 

\begin{proof}
The upper bound was proven in Example~\ref{hypercubebound}. For the lower bound, assume that $n \geq 4$ and that the robber is at vertex $v$. Let $v_1,...,v_n$ be the vertices adjacent to $v$.

{\it Case 1}: There are no cops adjacent to the robber. Every cop can be 
adjacent to at most 2 of the vertices in $\{v_1,...,v_n\}$ on the next cop 
move. Each of these vertices requires $n$ cops that can move adjacent to it in order to 
contain the robber after the robber moves to it. Hence at least $n^2/2$ cops
are necessary in order for the cops to win on their move after the robber's 
turn. Since $n^2/2$ is at least $2n{-}2$, we have the desired number of cops.

{\it Case 2}: There are $k$ cops adjacent to the robber ($n{-}1 > k > 1$). Let's say these are at $\{v, v_{n-k+1}\}, \{v,v_{n-k+2}\}, \dots, \{v, v_n\}$. To prevent the robber from being able to go safely to $v_1$, we therefore need $n{-}1$ additional cops. To also prevent escape to $v_2$, we need an additional $n{-}3$ cops since two of the cops preventing escape to $v_1$ can simultaneously be used for this purpose. Note that this already yields a total of at least $(n{-}1)+(n{-}3)+k \geq 2n{-}2$ required cops.

{\it Case 3}: There are exactly $n{-}1$ cops adjacent to the robber. An additional
$n{-}1$ cops can be found adjacent to an unblocked vertex to which the robber 
can travel, bringing the total to at least $2n{-}2$, as desired.

{\it Case 4}: There is exactly one cop adjacent to the robber; without loss of generality suppose she is on the edge $\{v, v_n\}$. 
Every other cop can be adjacent to at most 2 of the vertices in
$\{v_1,...,v_{n{-}1}\}$ on the next cop move. Each of these vertices requires
$n{-}1$ additional cops, so at least $(n-1)(n-1)/2$ additional cops are 
necessary. Since $\lceil (n-1)(n-1)/2 \rceil \geq 2n{-}3$ for $n \geq 4$, 
we
have at least $2n{-}2$ cops in this case, as well. (Note how this fails for $Q_3$!)
\end{proof}

\subsection{Cartesian products}
\label{cartesianproducts}

\begin{theorem}
$T \square K_2$ is containable when $T$ is a tree.
\end{theorem}

\begin{proof} 
Let $T$ be a tree with vertices $\{v_1, \dots ,v_n\}$ and let
$T'$ be a duplicate copy of $T$ with corresponding vertices $\{v_1',...,v_n'\}$.
Let $G = T \bigcup T'$, with additional edges such that $v_i$ is 
adjacent to $v_i'$ for all $1 \le i \le n$. We will show that $\Delta(T) {+} 1$ cops suffice to contain a robber on $G$.

Let $v$ be a leaf of $T$ with neighbor $x$ in $T$. We place
$\Delta(T)$ cops on $\{v,x\}$ and one cop on $\{v',x'\}$.

On subsequent moves, we advance the cops in parallel along the path 
between the robber and $v$ (or $v'$) until the robber is at some vertex $y$ (or 
$y'$), there are $\Delta(T)$ cops on $\{z,y\}$ (where $z$ is on the path from $v$ to 
$y$ in $T$), there is one cop on $\{z',y'\}$ (where $z'$ is on the path from $v'$ to $y'$ in $T'$), and it is the robber's turn to move.

We claim that we can either capture the robber, or put the game into
the above state with the robber occupying a vertex at greater distance 
from $v$ or $v'$ than $y$ or $y'$. This forces capture in a finite number of 
iterations of game play.

Note that on the robber's move, he can not move to a vertex closer
to $v$ or $v'$, since the edges $\{z',y'\}$ and $\{z,y\}$ are occupied.

\begin{enumerate}
    \item If the robber moves to a vertex at greater distance from $v$ or $v'$ than
$y$ or $y'$, we advance the cops in parallel to satisfy the above claim.

\item If the robber moves to, or remains at $y$, then he can be immediately 
captured. 

\item If the robber moves to, or remains at $y'$, then we move the $\Delta(T)$ 
cops on $\{z,y\}$ so that all neighbors of $y$ in T are blocked by cops, leaving 
the cop on $\{z',y'\}$ in place.

\item 
    \begin{enumerate}
        \item If the robber next moves to a vertex, say $s'$, at greater distance 
from $v'$ than $y'$ in $T'$, we can move the $\Delta(T)$ cops that are 
incident with $y$ to $\{y,s\}$, and the cop on $\{z',y'\}$ to $\{y',s'\}$ to satisfy the claim.

        \item If the robber moves to $y$, then he can be captured immediately.
        
        \item If the robber remains at $y'$, we leave the cop on $\{z',y'\}$ in place and
move the remaining cops so that for every neighbor $t$ of $y$, there is a cop occupying $\{t,t'\}$.

        \item 
        \begin{enumerate}
            \item If the robber moves to $y$, then he can be captured immediately.
            
            \item If the robber next moves to a vertex, say $s'$, at greater distance
from $v'$ than $y'$ in $T'$, we move the cop on $\{z',y'\}$ to $\{y',s'\}$ and the other cops
back to occupy the edges incident with $y$. Regardless of the robber's next 
move, those cops can all be moved to $\{y,s\}$. Since $s$ or $s'$ is on the path
from the robber to $v$ or $v'$, additional parallel advances (if necessary) 
can bring the game back to the claimed state. 

            \item If the robber remains at $y'$, then the cop on $\{z',y'\}$ moves to $\{y,y'\}$,
and the remaining cops move to occupy the edges incident to $y'$, capturing the robber.
        \end{enumerate}
    \end{enumerate}
\end{enumerate}
\end{proof}

\subsection{Graphs with large girth}
In the original game of Cops and Robber, the following theorem holds.
\begin{theorem}
Let $G$ be a graph with girth at least 5. Then $c(G) \geq \delta(G)$~\cite{girth8t3}.
\end{theorem}

In Containment, we see an analagous result for regular graphs. 
	\begin{prop}
		\label{girth 5+}
                If $G$ is a $\delta$-regular ($\delta >2$) graph with girth at least 5, then $G$ is not containable.
	\end{prop}
	
	\begin{proof}
	 	Let $G$ be a $\delta$-regular ($\delta >2$) graph with girth at least 5. The local neighborhood of any vertex $R$ in $G$ contains the graph in Figure~\ref{fig girth5_local} as 
                a subgraph (with no edges from $R$ to $a_{i,j}$ for any $i, j$ and no edges from $a_i$ to $a_{k,j}$ for any $i \neq k$; edges between $a_{i,j}$ and $a_{r,s}$ may exist for $i \neq r$). 
	 	
\begin{figure}[h!t]
\centering
\includegraphics[scale=.75]{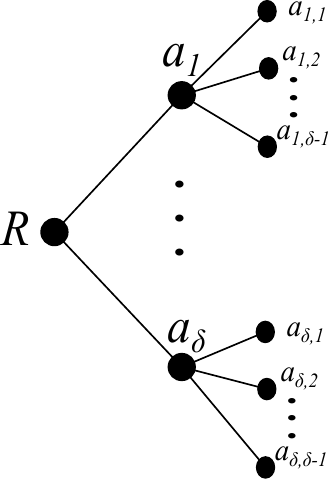}
\caption{}
\label{fig girth5_local}
\end{figure} 

We will exhibit a strategy which allows an uncontained robber to either remain at a vertex or move to an adjacent vertex such that $\delta$
cops cannot contain him on their subsequent move. If the cops have a winning strategy from some initial position, then they have a 
winning strategy from any initial position so we may assume that they are all initially positioned on the same edge. The robber can 
thus initially place himself at a vertex not incident with that edge to ensure that he is not contained after the cops' first turn.

Now assume that the robber is at vertex $R$ and is not contained. If the cops cannot contain him on their next move, then the robber 
remains at $R$. Otherwise, every cop must be on an edge incident with a neighbor of $R$. We consider two cases:

Case 1) There is a neighbor $v$ of $R$ with no cops on the edges incident with it. The robber moves to $v$ and can't be contained on the 
cops' next move, because since $G$ has girth at least 5, none of the edges incident with $v$, except possibly $Rv$, can be occupied by 
cops after the cops' next move.

Case 2) Every neighbor of $R$ has at least one, and hence exactly one, incident edge occupied by a cop. Since the robber is not contained, 
there is a neighbor $v$ of $R$ such that $Rv$ is not occupied by a cop. The robber moves to $v$. Since $\delta >2$, there is an edge other than 
$Rv$ that is incident to $v$ and not occupied by a cop. Since $G$ has girth at least 5, this edge will remain unoccupied after the cops' 
next move.
	\end{proof}
	
Note that Proposition~\ref{girth 5+} has as an obvious corollary that the Petersen graph is not containable. We suspect that the Petersen graph is the smallest such graph. 

In fact, the above argument proves the following stronger statement.

\begin{prop}
On any $\delta$-regular ($\delta >2$) graph $G$, the (deliberate and intelligent) robber will never be contained by $\delta$ cops at a vertex which is 
not part of a 3- or 4-cycle.
\end{prop}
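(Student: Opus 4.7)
The plan is to observe that the proof of Proposition \ref{girth 5+} uses the girth-$5$ hypothesis \emph{only} through the local structure it imposes on the $2$-neighborhood of the robber's vertex. That local structure is present around any individual vertex $v$ that does not lie on a $3$- or $4$-cycle, regardless of whether short cycles appear elsewhere in $G$. So the proposition will follow by running the earlier escape argument verbatim, with the robber's vertex $v$ replacing the vertex $R$ considered in Proposition~\ref{girth 5+}.

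First I would verify the local structure. Let $v$ be a vertex of $G$ not on any $3$- or $4$-cycle, and let $a_1,\ldots,a_d$ be its neighbors, where $d=\deg(v)\geq \delta(G)$. (As in Proposition~\ref{girth 5+}, one needs $\delta(G)\geq 3$ for the argument to bite.) The absence of a triangle through $v$ forces $a_ia_j\notin E(G)$ for $i\neq j$, and the absence of a $4$-cycle through $v$ forces $N(a_i)\cap N(a_{i'})=\{v\}$ for $i\neq i'$. Since $\deg(a_i)\geq \delta(G)\geq 3$, each $a_i$ has at least two further neighbors $a_{i,1},a_{i,2},\ldots$ lying outside $\{v,a_1,\ldots,a_d\}$, and these families are pairwise disjoint across $i$. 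This is exactly the induced subgraph of Figure~\ref{fig girth5_local}.

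Next I would replay the escape argument. Suppose the robber is at $v$ with $k<d$ cops occupying edges $va_1,\ldots,va_k$. If some $va_i$ with $i>k$ is not threatened by a cop poised at an edge $a_ia_{i,j(i)}$, the robber is free to stay put. Otherwise he moves to some such $a_i$. The cops at $va_\ell$ ($\ell\leq k$) can reach no edge incident with $a_i$ other than $va_i$; the cops at $a_{i'}a_{i',j(i')}$ for $i'\neq i$ can reach no edge incident with $a_i$ at all, since $a_i\notin N(a_{i'})\cup N(a_{i',j(i')})$ by the pairwise disjointness $N(a_i)\cap N(a_{i'})=\{v\}$; and the cop on $a_ia_{i,j(i)}$ can cover at most one edge at $a_i$. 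Hence at most two of the $\geq\delta(G)\geq 3$ edges incident with $a_i$ are occupied after the cops respond, so the robber still has an unblocked edge to use on his next turn and is not contained.

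The main obstacle is a careful ``localization audit'' of the earlier proof: one must confirm that every move-constraint used in Proposition~\ref{girth 5+} depends only on information within distance $2$ of $v$, and specifically only on the absence of a $3$- or $4$-cycle \emph{through $v$}, rather than anywhere in $G$. Once this is checked, the robber's strategy is simply: whenever he finds himself at a vertex that lies on no $3$- or $4$-cycle, apply the escape above. The game therefore never terminates with the robber contained at such a vertex.
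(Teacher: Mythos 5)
Your proposal matches the paper's own treatment: the paper proves this proposition simply by observing that the argument for Proposition~\ref{girth 5+} uses the girth hypothesis only through the local (distance-$2$) structure around the robber's vertex, which is exactly the ``localization audit'' you carry out. Your write-up is correct and in fact spells out the disjointness checks (no triangle or $4$-cycle \emph{through} $v$) more explicitly than the paper does.
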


We note also that a rather similar argument can be used to prove the following proposition.

\begin{prop}
If $G$ has girth at least 7 and is $\delta$-regular ($\delta>2$), then $G$ is not containable by $\delta+1$ cops.
\end{prop}

For each  $k {\in} [n{-}1]$, there exist $n$-vertex graphs $G$ with $\xi(G) {\geq} k$. We can also find $n$-vertex regular graphs with containability number bounded below by any fraction of $n$, by noting the fact that there exist $k$-regular graphs for any $k {\in} [n{-}1]$ so long as $nk$ is even, as a consequence of the Erd\"{o}s-Gallai characterization of degree sequences.~\cite{ErdGall} We can also find infinite families of graphs with unbounded containability number without relying on a large minimal degree, as we will see in Theorem~\ref{n^(1/6)} below. We will make use of the following theorem of Frankl~\cite{Frankl}.

\begin{theorem}
\label{Frankl 8t-3}
 Suppose that the minimum degree of $G$ is greater than $d$ and the girth of $G$ is at least $8t{-}3$. Then $c(G){>}d^t$.
\end{theorem}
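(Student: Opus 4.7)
The plan is to exhibit a winning strategy for the robber against any team of at most $d^t$ cops, thereby contradicting $c(G) \leq d^t$. The strategy has the flavor of ``follow the sparsely-watched branch'': at each turn, the robber looks at the tree of vertices reachable from his position in $t$ moves and commits to walking into a subtree that is cheaply defended.

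First I would record the geometric consequence of the girth hypothesis. Since $\mathrm{girth}(G) \geq 8t-3$, any ball of radius $4t-2$ in $G$ is a tree, because any cycle inside such a ball would have length at most $8t-4$. In particular, the depth-$t$ ball $T_R$ around the robber's current position $R$ is a tree. Because $\delta(G) \geq d+1$, the root $R$ has at least $d+1$ children in $T_R$ and every other internal vertex has at least $d$ children, so the set $L$ of depth-$t$ leaves of $T_R$ satisfies $|L| \geq (d+1)d^{t-1} > d^t$, with each of the $d+1$ depth-$1$ subtrees contributing at least $d^{t-1}$ leaves.

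Next, call a cop $C$ \emph{threatening} a leaf $\ell \in L$ if $d_G(C, \ell) \leq t$, so that $C$ could conceivably intercept a robber heading straight for $\ell$. The key combinatorial observation is that a single cop threatens leaves in at most one depth-$1$ subtree of $T_R$: if $C$ were within distance $t$ of leaves $\ell_1, \ell_2$ in distinct depth-$1$ subtrees, then concatenating the two $C$-to-$\ell_i$ geodesics would give a walk of length at most $2t$ between $\ell_1$ and $\ell_2$ in $G$ which, together with the unique tree path of length $2t$ through $R$, would close up to a cycle of length at most $4t < 8t-3$, contradicting the girth hypothesis. Hence at most $d^t$ cops threaten leaves lying in at most $d^t$ depth-$1$ subtrees (with multiplicity); a refined pigeonhole over the $d+1$ subtrees guarantees one of them has at most $d^{t-1}$ threatening cops, and the robber moves along the first edge of the path into that subtree.

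The step I expect to be the main obstacle is showing that this one-step escape can be iterated to produce a strategy that works forever. After a single joint move, $T_R$ and all cop positions shift, and the pigeonhole budget must be re-established relative to the new root. This is exactly why the girth is taken as large as $8t-3$ rather than the smaller value needed merely to make $T_R$ a tree: the argument must be run recursively for $t$ consecutive robber moves along a single chosen branch, at each stage shrinking the active subtree by a factor of $d$ and reducing the number of still-threatening cops by the same factor, and the extra girth slack guarantees that a cop eliminated from consideration at one level cannot re-enter at a deeper level by swinging around a short cycle. Establishing this monotonicity lemma carefully is the heart of the proof; once it is in place, the robber iterates the $t$-step escape indefinitely, at which point the outer tree can be re-centered on his new position and the argument restarted, yielding perpetual evasion and contradicting $c(G) \leq d^t$.
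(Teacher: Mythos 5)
The paper does not actually prove this statement: it is Frankl's theorem, quoted with the reader directed to \cite{Frankl} for the proof, so there is no in-paper argument to compare yours against. Judged on its own terms, your sketch has the right skeleton, and it is essentially the shape of Frankl's argument: large girth makes the radius-$(4t-2)$ ball around the robber a tree, minimum degree at least $d+1$ gives more than $d^t$ depth-$t$ escape targets, a short-cycle computation shows each cop can menace targets in essentially only one depth-one branch, and pigeonhole hands the robber a cheaply defended branch. (Even the static claim needs a touch more care than you give it: the concatenated walk of length at most $4t$ contains a cycle only if the two $\ell_1$--$\ell_2$ paths are genuinely distinct, so you must separately rule out the degenerate case where the cop sits on the tree path through $R$; and since a cop can intercept the robber at $p_i$ after $i+1$ cop moves, the right threshold for ``threatening'' a leaf is $d(C,\ell)\le t+1$, not $t$.)

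The decisive problem is that the entire content of the theorem lies in the step you explicitly defer (``establishing this monotonicity lemma carefully is the heart of the proof''). A count of threatening cops taken once, at the root, controls nothing after the first exchange of moves: during the robber's $t$-step run every cop gets $t$ moves of its own, and a cop that threatened no leaf of the chosen branch at time $0$ may threaten the robber's actual trajectory at time $i$. What is needed is a quantitative invariant carried along the run --- for instance, that the chosen path $P=(p_0,\dots,p_t)$ satisfies $d(C,p_i)>i+1$ for every cop $C$ and every $i$, so no cop can occupy $p_i$ by the time the robber is there --- together with a verification that on arrival at $p_t$ the hypotheses needed to restart the argument (no cop adjacent to the robber, and the counting condition relative to the new root) are re-established, plus an argument that the robber's initial placement can be chosen to satisfy the invariant in the first place. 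These are exactly the places where the girth constant $8t-3$, rather than the smaller value needed merely to make the relevant distances tree distances, gets spent, and none of them appear in your write-up. As it stands, the argument shows only that the robber survives one move, which does not contradict $c(G)\le d^t$; the proposal is a correct outline of the known proof strategy, not a proof.
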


For the proof of this theorem, the reader is directed to Frankl's work.

\begin{theorem}
\label{n^(1/6)}
For an infinite number of values of $n$, there exist cubic graphs $G$ on $n$ vertices such that $\xi(G) = \Omega(n^{1/6})$.
\end{theorem}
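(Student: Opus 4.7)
The plan is to combine three ingredients that are already on the table: the bound $c(G) \le \xi(G)$ from Theorem~\ref{c(G) < xi(G) < DG}, Frankl's Theorem~\ref{Frankl 8t-3} specialized to the cubic setting, and the existence of cubic graphs of large girth. Because any cubic graph has minimum degree $3 > 2$, Frankl's theorem applies with $d = 2$: every cubic graph $G$ of girth at least $8t-3$ satisfies $c(G) > 2^t$, and hence $\xi(G) > 2^t$. So the entire problem reduces to producing, for infinitely many $n$, an $n$-vertex cubic graph whose girth is large enough that we can choose $t$ with $2^t = \Omega(n^{1/6})$.

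Next I would do the easy algebraic matching. I want $2^t \ge n^{1/6}$, i.e.\ $t \ge \tfrac{1}{6}\log_2 n$, which in turn requires girth at least $8t - 3 \ge \tfrac{4}{3}\log_2 n - 3$. So the concrete target is: an infinite family of cubic graphs $\{G_n\}$, where $G_n$ has $n$ vertices and girth $g(G_n) \ge \tfrac{4}{3}\log_2 n - O(1)$. Once this is in hand, setting $t := \lfloor (g(G_n)+3)/8 \rfloor$ gives $t \ge \tfrac{1}{6}\log_2 n - O(1)$, and then Frankl's theorem combined with Theorem~\ref{c(G) < xi(G) < DG} yields
\[
\xi(G_n) \;\ge\; c(G_n) \;>\; 2^t \;=\; \Omega(n^{1/6}),
\]
which is exactly the conclusion.

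The main obstacle is therefore the graph-theoretic input: exhibiting cubic graphs of girth at least $\tfrac{4}{3}\log_2 n$. This is not the naive Erd\H{o}s--Sachs bound (which for cubic graphs only delivers girth $\ge \log_2 n + O(1)$, and would give $\xi(G) = \Omega(n^{1/8})$, falling short of the target). One needs a sharper construction such as an explicit Ramanujan-type family of cubic graphs; for instance, Morgenstern's $(q{+}1)$-regular Ramanujan graphs at $q=2$ yield an infinite family of cubic graphs with girth $\ge \tfrac{4}{3}\log_2 n$, which is exactly the strength required. I would cite such a construction and then plug in the parameters as above.

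In summary, the proof is a one-line combination of two cited results with an existence/construction step. The only non-routine part is choosing a graph family whose girth is large enough to make the Frankl exponent match $1/6$; everything else is bookkeeping with the inequalities $c(G) \le \xi(G)$ and $g \ge 8t-3$.
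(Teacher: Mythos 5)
Your argument is correct and is essentially identical to the paper's proof: both reduce via $c(G)\le\xi(G)$, apply Frankl's theorem with $d=2$ and $t\approx\tfrac{1}{6}\log_2 n$, and appeal to an infinite family of cubic graphs with girth at least $\tfrac{4}{3}\log_2 n - O(1)$ (the paper cites the sextet graphs of Biggs--Hoare/Weiss where you suggest Morgenstern's $q=2$ Ramanujan graphs, but these serve the same role). Your explicit observation that the naive Erd\H{o}s--Sachs girth bound would only yield $\Omega(n^{1/8})$ is a nice sanity check that the paper leaves implicit.
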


\begin{proof}
There exist connected, cubic graphs $G$ on $n$ vertices with girth $g$, and $n{\in}O(2^{3/4 g})$  \cite{Biggs,sextet,sextet2}. That is, the girth of these graphs is at least $\frac{4}{3} \log_2 (n) - C$ for some positive constant $C$. Therefore letting $d{=}2$ and $t {=} \frac{1}{6} \log_2(n) - C$, we get that $g \geq 8t$. Theorem~\ref{Frankl 8t-3} then yields that $c(G) > 2^{\frac{1}{6} \log_2(n) - C} = \Omega(n^{1/6})$. It follows from Theorem~\ref{c(G) < xi(G) < DG} that $\xi(G) = \Omega(n^{1/6})$.
\end{proof}

Such $r$-regular graphs are also known to exist for every integer $r{\geq}10$ \cite{Dahan}, yielding graphs with $n=O((r{-}1)^{3/4g})$ vertices and containability number at least $\Omega(n^{1/6})$.

\section{Directions for future work}

As stated in this work, the rules of Containment allow any player to stay put. It is interesting to note that the game changes significantly when the robber is no longer allowed to sit. For instance, only one cop is required to win on a tree in this case, as she can chase the robber down to one of the leaves. The Petersen graph also becomes containable with the initial placement of cops seen in Figure~\ref{Petersencontainable}. Initial placement of the robber at one of the four vertices not incident with an edge occupied by a cop leads to immediate 
containment on the cops' first move, while initial placement of the robber at one of the other six vertices would require the robber to move to one of the four vertices not incident with an edge occupied by a cop on his first move.
	 	
\begin{figure}[h!t]
\centering
\includegraphics[scale=.2]{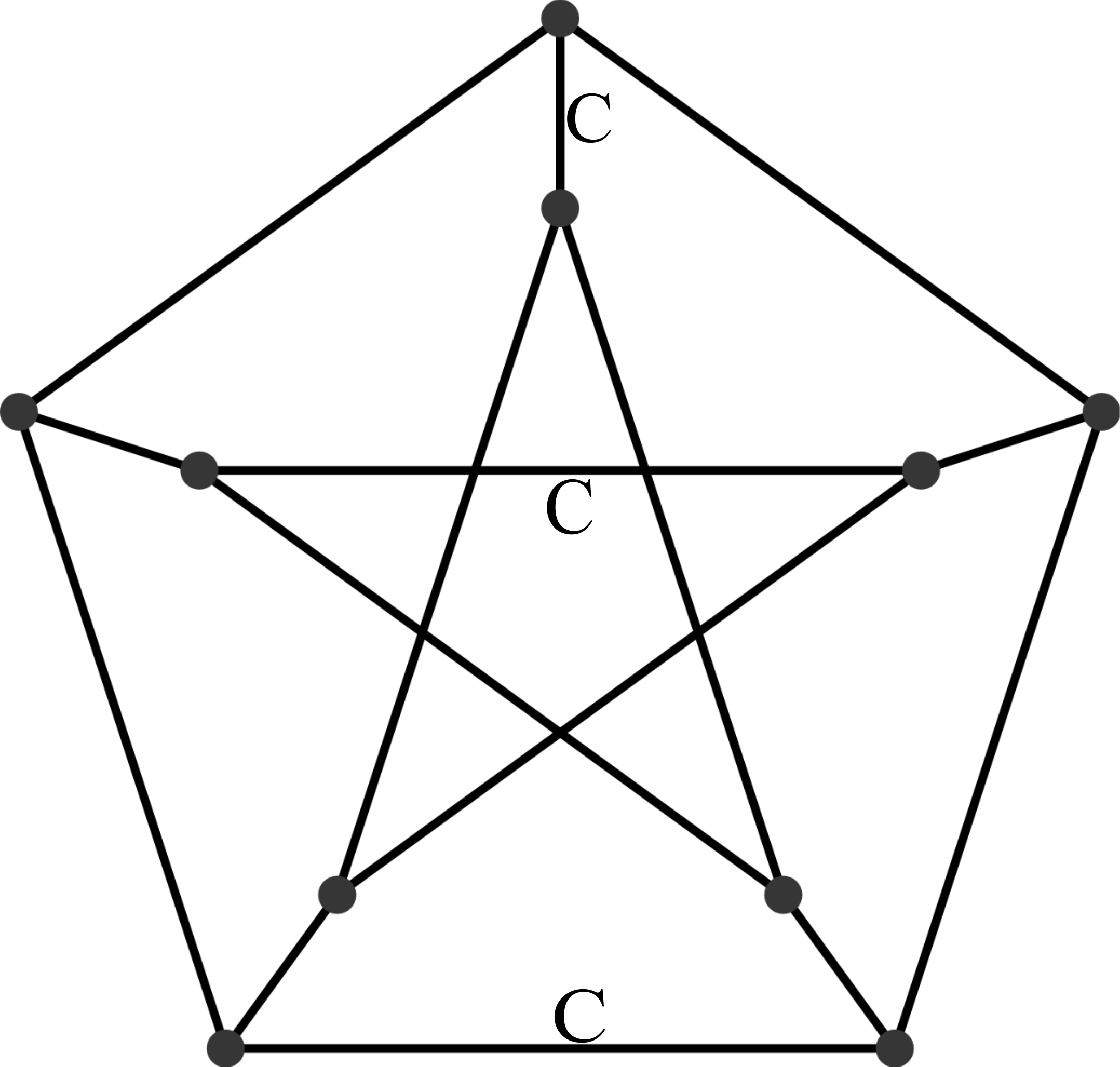}
\caption{An initial configuration of 3 cops that leads to containment in at most two moves when the robber is not allowed to sit at a vertex}
\label{Petersencontainable}
\end{figure}

Clearly the number of cops required in this variation is always at most $\xi(G)$; equality can also hold (for instance, in the case of a complete graph). Studying this variation that places more restriction on the robber's moves may yield interesting results. In addition to the question of containability number, one might wish to study the question of capture time (in both variations).

Another natural direction for future work involves considering the worst case for containability number.  Meyniel's Conjecture~\cite{Frankl} states that in the original Cops and Robber game, $O(\sqrt{n})$ cops always suffice to capture a robber on a graph with $n$ vertices. We conjecture (see this conjecture in Section~\ref{prelims} for more information) that the bound $\xi(G) {\le} \Delta(G) c(G)$ holds in the case of Containment, which yields an upper bound of $O(\Delta(G) \sqrt{n})$ if Meyniel's Conjecture holds. 

The conjecture that $G\square H$ is containable whenever $G$ and $H$ are both containable has been suggested to the authors. This is false, however: for instance, $Q_3 \square K_2 = Q_4$ is a counterexample. In Section~\ref{cartesianproducts}, we showed that $T\square K_2$ is containable for all trees $T$. This leaves some open questions about Cartesian products: For what graphs $G$ is $G\square K_2$ containable? For what $G$ and $H$ is $G\square H$ containable?



\bigskip

\bibliographystyle{plain}
\bibliography{copsandrobbersbib}
\end{document}